\newtheorem{thm}{Theorem}[section]
\newtheorem{cor}[thm]{Corollary}
\newtheorem{lem}[thm]{Lemma}
\newtheorem{exm}[thm]{Example}
\newtheorem{prop}[thm]{Proposition}
\theoremstyle{definition}
\theoremstyle{remark}
\newtheorem{rem}[thm]{\bf Remark}
\numberwithin{equation}{section}
\begin{document}
\title[Singular equivalences induced by homological epimorphisms]
{Singular equivalences induced by homological epimorphisms}

\author[  Xiao-Wu Chen
] {Xiao-Wu Chen}

\thanks{The author is supported by Special Foundation of President of The Chinese Academy of Sciences
(No.1731112304061) and  National Natural Science Foundation of China (No.10971206).}
\subjclass{18E30, 13E10, 16E50}
\date{\today}

\thanks{E-mail:
xwchen$\symbol{64}$mail.ustc.edu.cn}
\keywords{singularity category, triangle equivalence, homological epimorphism, non-Gorenstein algebra}%

\maketitle

\dedicatory{}%
\commby{}%

\begin{abstract}
We prove that a certain homological epimorphism between two algebras induces  a triangle equivalence
between their singularity categories. Applying the result to a construction of matrix algebras, we describe
the singularity categories of some non-Gorenstein algebras.
\end{abstract}

\section{Introduction}

Let $A$ be a finite dimensional algebra over a field $k$. Denote by $A\mbox{-mod}$ the
category of finitely generated left $A$-modules, and by $\mathbf{D}^b(A\mbox{-mod})$ the bounded derived category.
Following \cite{Or04}, the \emph{singularity category} $\mathbf{D}_{\rm sg}(A)$ of $A$ is the Verdier quotient
triangulated category of $\mathbf{D}^b(A\mbox{-mod})$ with respect to the full subcategory
formed by perfect complexes; see also \cite{Buc, KV, Hap91, Ric, Bel2000, Kra} and \cite{Ch3}.

The singularity category  measures the homological singularity of an algebra:  the algebra
$A$ has finite global dimension if and only if  its singularity category $\mathbf{D}_{\rm sg}(A)$
is trivial. Meanwhile, the singularity category captures the stable homological features of an algebra (\cite{Buc}).

A fundamental result of Buchweitz and Happel states that for a Gorenstein algebra $A$, the singularity category $\mathbf{D}_{\rm sg}(A)$ is triangle equivalent to the stable category of (maximal) Cohen-Macaulay $A$-modules (\cite{Buc} and \cite{Hap91}). This result specializes to Rickard's result (\cite{Ric}) on self-injective algebras.  For non-Gorenstein algebras,
not much is known about their singularity categories (\cite{Ch09, Ch11}).

The following concepts might be useful in the study of singularity categories. Two algebras
$A$ and $B$ are said to be \emph{singularly equivalent} provided that there is a triangle equivalence
between $\mathbf{D}_{\rm sg}(A)$ and $\mathbf{D}_{\rm sg}(B)$. Such an equivalence is  called a
\emph{singular equivalence}; compare \cite{Or06}. In this case, if $A$ is non-Gorenstein and $B$ is Gorenstein, then
Buchweitz-Happel's theorem applies to give a description of $\mathbf{D}_{\rm sg}(A)$ in terms of Cohen-Macaulay modules
over $B$. We observe that a derived equivalence of two algebras, that is, a triangle equivalence between their bounded derived categories, induces naturally
a singular equivalence. The converse is not true in general.

Let $A$ be an algebra and let $J\subseteq A$ be a two-sided ideal. Following \cite{PeXi}, we call $J$ a \emph{homological ideal}
provided that the canonical map $A\rightarrow A/J$ is a homological epimorphism (\cite{GL}), meaning that the naturally induced
functor ${\bf D}^b(A/J\mbox{-mod})\rightarrow {\bf D}^b(A\mbox{-mod})$ is fully faithful.

The main observation we make is as follows.
\vskip 5pt

\noindent {\bf Theorem}. \emph{Let $A$ be a finite dimensional $k$-algebra and let $J\subseteq A$ be a
homological ideal which has finite projective dimension as an $A$-$A$-bimodule. Then
there is a singular equivalence between $A$ and $A/J$.}

\vskip 5pt

The paper is structured as follows. In Section 2, we recall some ingredients and then prove Theorem. In Section 3, we
apply Theorem to a construction of  matrix algebras, and then describe the
singularity categories of some non-Gorenstein algebras. In particular, we give two examples,
which extend in different manners an example considered by Happel in \cite{Hap91}.

\section{Proof of Theorem}

We will present the proof of Theorem in this section. Before that, we recall from \cite{Ver77} and \cite{Har66} some
known results on triangulated categories and derived categories.

Let $\mathcal{T}$ be a triangulated category. We will denote its translation functor by $[1]$.
For a triangulated subcategory $\mathcal{N}$, we denote by $\mathcal{T}/\mathcal{N}$ the Verdier
quotient triangulated category. The quotient functor $q\colon \mathcal{T}\rightarrow \mathcal{T}/\mathcal{N}$
has the property that $q(X)\simeq 0$ if and only if $X$ is a direct summand of an object in $\mathcal{N}$.
In particular, if $\mathcal{N}$ is a \emph{thick} subcategory, that is, it is closed under direct summands,
we have that ${\rm Ker}\; q=\mathcal{N}$. Here, for a triangle functor $F$, ${\rm Ker}\; F$ denotes
the (thick) triangulated subcategory consisting of objects on which $F$ vanishes.

The following result is well known.

\begin{lem}\label{lem:1}
Let $F\colon \mathcal{T}\rightarrow \mathcal{T}'$ be a triangle functor which allows a fully faithful right adjoint
$G$. Then $F$ induces uniquely a triangle equivalence $\mathcal{T}/{{\rm Ker}\; F}\simeq \mathcal{T}'$.
\end{lem}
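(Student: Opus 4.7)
The plan is to factor $F$ through the Verdier quotient and show the resulting functor is a triangle equivalence by constructing an explicit quasi-inverse from the right adjoint $G$.

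First I would invoke that $F$ is a triangle functor, so ${\rm Ker}\; F$ is a thick triangulated subcategory of $\mathcal{T}$ (as recalled just above the lemma). The universal property of the Verdier quotient $q\colon \mathcal{T}\rightarrow \mathcal{T}/{{\rm Ker}\; F}$ then yields a unique triangle functor $\bar{F}\colon \mathcal{T}/{{\rm Ker}\; F} \rightarrow \mathcal{T}'$ with $\bar{F}\circ q = F$, which takes care of the uniqueness claim.

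Next, since $G$ is fully faithful, the counit $\epsilon\colon FG \Rightarrow \mathrm{Id}_{\mathcal{T}'}$ is a natural isomorphism. The triangle identity $(\epsilon F)\circ(F\eta)=\mathrm{Id}_F$ then forces $F(\eta_X)$ to be an isomorphism for every $X\in\mathcal{T}$, so completing $\eta_X\colon X\to GFX$ to a distinguished triangle shows the cone lies in ${\rm Ker}\; F$. Consequently $q(\eta_X)$ is an isomorphism in $\mathcal{T}/{{\rm Ker}\; F}$.

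The candidate quasi-inverse is $\bar{G}:=qG\colon \mathcal{T}'\rightarrow \mathcal{T}/{{\rm Ker}\; F}$. To set up the adjunction $\bar{F}\dashv\bar{G}$, I would first observe that for any $N\in{\rm Ker}\; F$ and any $Y\in\mathcal{T}'$, the original adjunction gives
$$\mathrm{Hom}_{\mathcal{T}}(N, GY) \;\cong\; \mathrm{Hom}_{\mathcal{T}'}(FN, Y) \;=\; 0,$$
so $GY$ is right-orthogonal to ${\rm Ker}\; F$. A standard calculus-of-fractions argument then upgrades this to the statement that $q$ induces an isomorphism $\mathrm{Hom}_{\mathcal{T}}(X,GY)\cong \mathrm{Hom}_{\mathcal{T}/{{\rm Ker}\; F}}(qX, \bar{G}Y)$. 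Composing with the adjunction $F\dashv G$ produces the natural isomorphism $\mathrm{Hom}_{\mathcal{T}/{{\rm Ker}\; F}}(qX, \bar{G}Y) \cong \mathrm{Hom}_{\mathcal{T}'}(\bar{F}(qX), Y)$, establishing $\bar{F}\dashv\bar{G}$.

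Finally, $\bar{F}\bar{G}=FG\cong \mathrm{Id}_{\mathcal{T}'}$ via $\epsilon$, while $\bar{G}\bar{F}(qX)=q(GFX)$ is naturally isomorphic to $qX$ via $q(\eta_X)^{-1}$ by the second paragraph. Hence $\bar{F}$ and $\bar{G}$ are quasi-inverse triangle equivalences. The main obstacle I anticipate is the Verdier-calculus step verifying that right-orthogonality of $GY$ to ${\rm Ker}\; F$ promotes $\mathrm{Hom}$ to a bijection after localization; the remainder reduces to formal manipulations with units, counits, and triangle identities.
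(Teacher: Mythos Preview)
Your argument is correct; the orthogonality step you flag as the main obstacle is indeed the standard Verdier-quotient fact that for $Z\in\mathcal{N}^\perp$ the functor $q$ induces bijections $\mathrm{Hom}_{\mathcal{T}}(X,Z)\cong\mathrm{Hom}_{\mathcal{T}/\mathcal{N}}(qX,qZ)$, and the remainder is formal as you say (note that your step~4 is not strictly needed once you have the natural isomorphisms of step~5, but it does no harm). The paper does not actually prove the lemma: it only observes that the induced functor exists by the universal property and then cites \cite[Proposition I.1.3]{GZ} and \cite[Propositions 1.5 and 1.6]{BK}; your write-up is precisely the triangulated analogue of the Gabriel--Zisman argument the paper is pointing to, so the approaches coincide.
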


\begin{proof}
The existence of the induced functor follows from the universal property of the quotient functor.
The result is a triangulated version of \cite[Proposition I. 1.3]{GZ}. For details, see \cite[Propositions 1.5 and 1.6]{BK}.
\end{proof}

Let $F\colon \mathcal{T}\rightarrow \mathcal{T}'$ be a triangle functor. Assume that $\mathcal{N}\subseteq \mathcal{T}$
and $\mathcal{N}'\subseteq \mathcal{T}'$ are triangulated subcategories satisfying $F\mathcal{N}\subseteq \mathcal{N}'$. Then there is a uniquely induced
triangle functor $\bar{F}\colon \mathcal{T}/\mathcal{N}\rightarrow \mathcal{T}'/{\mathcal{N}'}$.

\begin{lem}\label{lem:2}{\rm (\cite[Lemma 1.2]{Or04})}
Let $F\colon \mathcal{T}\rightarrow \mathcal{T}'$ be a triangle functor which has a right adjoint $G$.
Assume that $\mathcal{N}\subseteq \mathcal{T}$ and $\mathcal{N}'\subseteq \mathcal{T}'$ are triangulated
subcategories satisfying that $F\mathcal{N}\subseteq \mathcal{N}'$ and $G\mathcal{N}'\subseteq \mathcal{N}$.
Then the induced functor $\bar{F}\colon \mathcal{T}/\mathcal{N}\rightarrow \mathcal{T}'/{\mathcal{N}'}$ has a
right adjoint $\bar{G}$. Moreover, if $G$ is fully faithful, so is $\bar{G}$.
\end{lem}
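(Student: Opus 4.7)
The plan is to construct $\bar{G}$ via the universal property of the Verdier quotient and then to descend the unit and counit of the adjunction $F \dashv G$ to obtain the desired adjunction. Write $q\colon \mathcal{T}\to \mathcal{T}/\mathcal{N}$ and $q'\colon \mathcal{T}'\to \mathcal{T}'/\mathcal{N}'$ for the two quotient functors. By the assumption $G\mathcal{N}'\subseteq \mathcal{N}$, the composite $q\circ G\colon \mathcal{T}'\to \mathcal{T}/\mathcal{N}$ vanishes on $\mathcal{N}'$, so the universal property applied to $q'$ yields a unique triangle functor $\bar{G}\colon \mathcal{T}'/\mathcal{N}'\to \mathcal{T}/\mathcal{N}$ with $\bar{G}\circ q'=q\circ G$. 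The induced functor $\bar{F}$ already exists, with $\bar{F}\circ q=q'\circ F$, by the discussion preceding the lemma statement.

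Next I produce the unit and counit of the would-be adjunction $\bar{F}\dashv \bar{G}$ by pushing the original ones down. Let $\eta\colon \mathrm{id}_{\mathcal{T}}\Rightarrow GF$ and $\epsilon\colon FG\Rightarrow \mathrm{id}_{\mathcal{T}'}$ be the unit and counit of $F\dashv G$. Using $\bar{G}\bar{F}\circ q=q\circ GF$ and $\bar{F}\bar{G}\circ q'=q'\circ FG$, one sets $\bar{\eta}_{q(X)}:=q(\eta_X)$ and $\bar{\epsilon}_{q'(Y)}:=q'(\epsilon_Y)$. Once naturality is settled, the triangle identities for $\bar{\eta}$ and $\bar{\epsilon}$ are inherited from those for $\eta$ and $\epsilon$ simply by applying $q$ and $q'$, since every object of $\mathcal{T}/\mathcal{N}$ (resp.\ $\mathcal{T}'/\mathcal{N}'$) has the form $q(X)$ (resp.\ $q'(Y)$).

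The main obstacle is verifying that $\bar{\eta}$ and $\bar{\epsilon}$ are natural with respect to arbitrary morphisms in the Verdier quotient, not only those lifted from $\mathcal{T}$ or $\mathcal{T}'$. A general morphism in $\mathcal{T}/\mathcal{N}$ is represented by a roof $X\xleftarrow{s} W\xrightarrow{f} Y$ with $\mathrm{cone}(s)\in \mathcal{N}$. The key observation is that $GF\mathcal{N}\subseteq G\mathcal{N}'\subseteq \mathcal{N}$, so $\mathrm{cone}(GF(s))$ again lies in $\mathcal{N}$ and therefore $q(GF(s))$ is invertible in the quotient; combined with the naturality of $\eta$ applied to the honest morphisms $s$ and $f$, this yields the required commuting square for $\bar{\eta}$, and symmetrically for $\bar{\epsilon}$. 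Finally, for the \emph{moreover} clause, recall that a right adjoint $G$ is fully faithful precisely when the counit $\epsilon$ is a natural isomorphism; applying $q'$ then shows that $\bar{\epsilon}$ is likewise a natural isomorphism, which is equivalent to $\bar{G}$ being fully faithful.
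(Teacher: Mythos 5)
Your proposal is correct and follows essentially the same route as the paper's proof: descend the unit and counit of $(F,G)$ to the quotients and verify they yield an adjunction, then observe that the isomorphism property of the counit passes to the quotient. The paper compresses the verification (naturality, triangle identities) into a citation of Mac Lane, whereas you spell out the roof-calculus argument for naturality explicitly; both are fine, and your expansion is exactly the content the citation covers.
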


\begin{proof}
The unit and counit of $(F, G)$ induce uniquely two natural transformations ${\rm Id}_{\mathcal{T}/{\mathcal{N}}}\rightarrow
\bar{G}\bar{F}$ and $\bar{F}\bar{G}\rightarrow {\rm Id}_{\mathcal{T}'/\mathcal{N}'}$, which are the corresponding unit and
counit of the adjoint pair $(\bar{F}, \bar{G})$; consult \cite[Chapter IV, Section 1, Theorem 2(v)]{MacL}. Note that
the fully-faithfulness of $G$ is equivalent to that the counit of $(F, G)$ is an isomorphism. It follows that
the counit of $(\bar{F}, \bar{G})$ is also an isomorphism, which is equivalent to the fully-faithfulness of $\bar{G}$; consult \cite[Chapter IV, Section 3, Theorem 1]{MacL}.
\end{proof}

 Let $k$ be a field and let $A$ be a finite dimensional $k$-algebra. Recall that $A\mbox{-mod}$
 is the category of finite dimensional left $A$-modules.  We write
$_AA$ for the regular left $A$-module. Denote by $\mathbf{D}(A\mbox{-mod})$  (\emph{resp}. $\mathbf{D}^b(A\mbox{-mod})$)
the (\emph{resp}. bounded) derived category of $A\mbox{-mod}$.  We identify $A\mbox{-mod}$ as the full subcategory of $\mathbf{D}^b(A\mbox{-mod})$ consisting of stalk complex concentrated at degree zero; see \cite[Proposition I. 4.3]{Har66}.

 A complex of $A$-modules is usually denoted by $X^{\bullet}=(X^n, d^n)_{n\in \mathbb{Z}}$, where $X^n$ are
 $A$-modules and the differentials $d^n\colon X^n\rightarrow X^{n+1}$ are homomorphisms of modules satisfying $d^{n+1}\circ d^n=0$.
 Recall that a complex in $\mathbf{D}^b(A\mbox{-mod})$ is \emph{perfect} provided that it is isomorphic to a bounded complex consisting of projective modules. The full subcategory
consisting of perfect complexes is denoted by ${\rm perf}(A)$. Recall from \cite[Lemma 1.2.1]{Buc}  that
a complex $X^\bullet$ in $\mathbf{D}^b(A\mbox{-{\rm mod}})$ is perfect if and only if there is a natural number
$n_0$ such that for each $A$-module $M$, ${\rm Hom}_{\mathbf{D}^b(A\mbox{-}{\rm mod})}(X^\bullet, M[n])=0$ for all $n\geq n_0$.
  It follows that ${\rm perf}(A)$ is  a thick  subcategory of $\mathbf{D}^b(A\mbox{-mod})$.  Indeed, it is the smallest thick subcategory of $\mathbf{D}^b(A\mbox{-mod})$ containing $_AA$.

Let $\pi\colon A\rightarrow B$ be a homomorphism of algebras. The functor of restricting of scalars
$\pi^*\colon B\mbox{-mod}\rightarrow A\mbox{-mod}$ is exact, and it extends to a triangle functor
$\mathbf{D}^b(B\mbox{-mod})\rightarrow \mathbf{D}^b(A\mbox{-mod})$, which will still be denoted by
$\pi^*$. Following \cite{GL}, we call  the homomorphism $\pi$ a \emph{homological epimorphism} provided that
$\pi^*\colon \mathbf{D}^b(B\mbox{-mod})\rightarrow \mathbf{D}^b(A\mbox{-mod})$ is fully faithful. By \cite[Theorem 4.1(1)]{GL} this is equivalent to that $\pi\otimes^\mathbf{L}_A B\colon B \simeq A\otimes_A^\mathbf{L} B\rightarrow B\otimes^\mathbf{L}_AB$ is an isomorphism in $\mathbf{D}(A^e\mbox{-mod})$. Here, $A^e=A\otimes_k A^{\rm op}$ is the enveloping algebra of $A$, and
we identify $A^e\mbox{-mod}$ as the category of $A$-$A$-bimodules.

\begin{lem}\label{lem:3}{\rm (\cite[Proposition 2.2(a)]{PeXi})}
Let $J\subseteq A$ be an ideal and let $\pi\colon A\rightarrow A/J$ be the canonical projection. Then
$\pi$ is a homological epimorphism if and only if $J^2=J$ and ${\rm Tor}_i^A(J, A/J)=0$ for all $i\geq 1$.
\end{lem}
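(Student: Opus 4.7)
The plan is to use the characterization of homological epimorphisms recalled immediately before the statement: $\pi\colon A\to A/J$ is a homological epimorphism if and only if the natural map
\[
\pi\otimes_A^{\mathbf L}(A/J)\colon A/J\simeq A\otimes_A^{\mathbf L}(A/J)\longrightarrow (A/J)\otimes_A^{\mathbf L}(A/J)
\]
is an isomorphism in $\mathbf{D}(A^e\mbox{-mod})$. The whole proof reduces the analysis of this single map to the two conditions in the statement.

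First, I would view the canonical short exact sequence $0\to J\to A\to A/J\to 0$ as a distinguished triangle in $\mathbf{D}(A^e\mbox{-mod})$ and apply the exact functor $-\otimes_A^{\mathbf L}(A/J)$ to it. This yields the triangle
\[
J\otimes_A^{\mathbf L}(A/J)\longrightarrow A/J\xrightarrow{\;\pi\otimes_A^{\mathbf L}(A/J)\;}(A/J)\otimes_A^{\mathbf L}(A/J)\longrightarrow J\otimes_A^{\mathbf L}(A/J)[1],
\]
in which the middle arrow is precisely the map whose invertibility we need. Hence $\pi$ is a homological epimorphism if and only if $J\otimes_A^{\mathbf L}(A/J)=0$ in $\mathbf{D}(A^e\mbox{-mod})$.

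Next, I would translate the vanishing of this complex into a condition on cohomology. A complex of bimodules is zero in $\mathbf{D}(A^e\mbox{-mod})$ exactly when all its cohomologies vanish, which in turn can be tested after forgetting to abelian groups. The $i$-th cohomology of $J\otimes_A^{\mathbf L}(A/J)$ is $\mathrm{Tor}_i^A(J,A/J)$, so the vanishing is equivalent to $\mathrm{Tor}_i^A(J,A/J)=0$ for every $i\geq 0$. For $i\geq 1$ this is one half of the claim. For $i=0$, the right-exactness of $-\otimes_A(A/J)$ applied to $0\to J\to A\to A/J\to 0$ identifies $J\otimes_A(A/J)$ with $J/J^2$, so $\mathrm{Tor}_0^A(J,A/J)=0$ is equivalent to $J=J^2$.

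There is no real obstacle in this argument; the only point requiring a moment's care is the passage from the bimodule derived tensor product being zero to the vanishing of the ordinary $\mathrm{Tor}$ groups. This is safe because the forgetful functor from $A^e\mbox{-mod}$ to abelian groups is exact and conservative on acyclicity, so the bimodule complex $J\otimes_A^{\mathbf L}(A/J)$ is zero if and only if its underlying complex of abelian groups is acyclic, i.e.\ all $\mathrm{Tor}_i^A(J,A/J)$ vanish. Combining the two halves yields the stated equivalence.
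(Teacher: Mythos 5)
Your proof is correct and follows essentially the same route as the paper: apply $-\otimes_A^{\mathbf L}(A/J)$ to the triangle coming from $0\to J\to A\to A/J\to 0$, reduce the homological epimorphism condition to $J\otimes_A^{\mathbf L}(A/J)=0$, and unwind this into $\mathrm{Tor}_i^A(J,A/J)=0$ for all $i\geq 0$, with the $i=0$ case giving $J/J^2=0$. The only difference is cosmetic: you spell out the conservativity of the forgetful functor to abelian groups, a point the paper leaves implicit.
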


In the situation of the lemma, the ideal $J$ is called a \emph{homological ideal} in \cite{PeXi}. As a special case,
we call an ideal  $J$  a \emph{hereditary ideal} provided that $J^2=J$ and $J$ is a projective $A$-$A$-bimodule; compare \cite[Lemma 3.4]{PeXi}.

\begin{proof}
The natural  exact sequence $0\rightarrow J\rightarrow A\rightarrow A/J\rightarrow 0$ of $A$-$A$-bimodules induces
a triangle $J\rightarrow A\rightarrow A/J\rightarrow J[1]$ in $\mathbf{D}^b(A^e\mbox{-mod})$. Applying the
functor $-\otimes^\mathbf{L}_A A/J$, we get  a triangle $J\otimes^\mathbf{L}_A A/J \rightarrow A/J \rightarrow A/J\otimes^\mathbf{L}_A A/J\rightarrow J\otimes^\mathbf{L}_A A/J[1]$. Then $\pi$ is a homological epimorphism, or equivalently $\pi\otimes_A A/J$ is an isomorphism if and only if
$J\otimes^\mathbf{L}_A A/J=0$; see \cite[Lemma I.1.7]{Hap88}. This is equivalent to that  ${\rm Tor}_i^A(J, A/J)=0$ for all $i\geq 0$. We note that ${\rm Tor}_0^A(J, A/J)\simeq J\otimes_AA/J\simeq J/J^2$.
\end{proof}

Now we are in the position to prove Theorem. Recall that for an algebra $A$, its singularity category $\mathbf{D}_{\rm sg}(A)=\mathbf{D}^b(A\mbox{-mod})/{{\rm perf}(A)}$. Moreover, a complex $X^\bullet$ becomes zero in $\mathbf{D}_{\rm sg}(A)$
if and only if it is perfect. Here, we use the fact that ${\rm perf}(A)\subseteq \mathbf{D}^b(A\mbox{-mod})$ is a thick
subcategory.

\vskip 10pt

\noindent {\bf Proof of Theorem.}\quad Write $B=A/J$.  Since $J$, as an $A$-$A$-bimodule, has finite projective dimension,
so it  has finite projective dimension both as a left and right $A$-module. Consider the natural exact sequence
$0\rightarrow J\rightarrow A\rightarrow B\rightarrow 0$. It follows that $B$, both as a left and right $A$-module, has
finite projective dimension. Moreover, for a complex $X^\bullet$ in $\mathbf{D}^b(A\mbox{-mod})$, $J\otimes^\mathbf{L}_AX^\bullet$
is perfect. Indeed, take a bounded projective resolution $P^\bullet\rightarrow J$ as an $A^e$-module. Then $J\otimes^\mathbf{L}_AX^\bullet\simeq P^\bullet \otimes_A X^\bullet$. This is a perfect complex, since each left $A$-module $P^i\otimes_AX^j$ is projective.

Denote by $\pi\colon A\rightarrow B$ be the canonical projection. By the assumption,  the functor $\pi^*\colon \mathbf{D}^b(B\mbox{-mod})\rightarrow \mathbf{D}^b(A\mbox{-mod})$ is fully faithful.
Since  $\pi^*(B)$ is perfect, the functor $\pi^*$ sends perfect complexes to perfect complexes.
Then it induces a triangle functor $\bar{\pi^*}\colon \mathbf{D}_{\rm sg}(B)\rightarrow \mathbf{D}_{\rm sg}(A)$. We will show
that $\bar{\pi^*}$ is an equivalence.

 The functor $\pi^*\colon \mathbf{D}^b(B\mbox{-mod})\rightarrow \mathbf{D}^b(A\mbox{-mod})$ has a left adjoint $F=B\otimes_A^\mathbf{L}-\colon \mathbf{D}^b(A\mbox{-mod})\rightarrow \mathbf{D}^b(B\mbox{-mod})$. Here we use
 the fact that the right $A$-module $B$ has finite projective dimension.  Since $F$ sends perfect complexes
 to perfect complexes, we have the induced triangle functor $\bar{F}\colon  \mathbf{D}_{\rm sg}(A)\rightarrow \mathbf{D}_{\rm sg}(B)$. By Lemma \ref{lem:2} we have the adjoint pair $(\bar{F}, \bar{\pi^*})$; moreover, the functor $\bar{\pi^*}$ is fully
 faithful. By Lemma \ref{lem:1} there is a triangle equivalence $\mathbf{D}_{\rm sg}(A)/{{\rm Ker}\; \bar{F}}\simeq \mathbf{D}_{\rm sg}(B)$.

 It remains to show that the essential kernel ${\rm Ker}\; \bar{F}$ is trivial. For this, assume
 that a complex $X^\bullet$ lies in ${\rm Ker}\; \bar{F}$. This means that the complex $F(X^\bullet)$ in $\mathbf{D}^b(B\mbox{-mod})$ is perfect. Since
 $\pi^* $ preserves perfect complexes, it follows that $\pi^*F(X^\bullet)$ is also perfect. The natural exact sequence
$0\rightarrow J\rightarrow A\rightarrow B\rightarrow 0$ induces a triangle $J\otimes^\mathbf{L}_A X^\bullet \rightarrow
X^\bullet \rightarrow \pi^*F(X^\bullet) \rightarrow J\otimes^\mathbf{L}_A X^\bullet [1]$ in $\mathbf{D}^b(A\mbox{-mod})$.
Recall that $J\otimes^\mathbf{L}_AX^\bullet$ is perfect. It follows that $X^\bullet$ is perfect, since ${\rm perf}(A)\subseteq
\mathbf{D}^b(A\mbox{-mod})$ is a triangulated subcategory. The proves that $X^\bullet$ is zero in $\mathbf{D}_{\rm sg}(A)$. \hfill
$\square$

\vskip 10pt

The following special case of Theorem is of interest.

\begin{cor}\label{cor:1}
Let $A$ be a finite dimensional algebra and $J\subseteq A$ a hereditary ideal. Then we have a triangle equivalence $\mathbf{D}_{\rm sg}(A)\simeq \mathbf{D}_{\rm sg}(A/J)$.
\end{cor}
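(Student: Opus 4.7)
The plan is to deduce this directly from the main Theorem above. Since a hereditary ideal $J$ is, by definition, projective as an $A$-$A$-bimodule, its projective dimension as an $A^e$-module is zero, and in particular finite; so the bimodule hypothesis of the Theorem is automatic. It remains to verify that $J$ is a homological ideal, i.e., that the canonical projection $\pi\colon A\to A/J$ is a homological epimorphism.

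For this I would invoke Lemma \ref{lem:3}, which translates the homological epimorphism condition into the two requirements $J^2 = J$ and ${\rm Tor}_i^A(J, A/J) = 0$ for all $i\geq 1$. The first holds by definition of a hereditary ideal. For the second I would argue that the enveloping algebra $A^e = A\otimes_k A^{\rm op}$ is free as a right $A$-module (a $k$-basis of $A$ on the left tensor factor gives an explicit right $A$-basis), so any projective $A^e$-module is automatically projective as a right $A$-module. Applied to $J$, this shows that $J$ is a projective right $A$-module, hence ${\rm Tor}_i^A(J, -) = 0$ for all $i\geq 1$, and in particular the required vanishing holds.

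Having verified both hypotheses, the Theorem delivers the desired triangle equivalence $\mathbf{D}_{\rm sg}(A) \simeq \mathbf{D}_{\rm sg}(A/J)$. There is no real obstacle here; the only step requiring a brief check is the passage from $A^e$-projectivity to one-sided projectivity of $J$, which is a routine bookkeeping remark about the structure of $A\otimes_k A^{\rm op}$ as a one-sided module. The whole argument is therefore essentially a specialization of the Theorem together with a one-line application of Lemma \ref{lem:3}.
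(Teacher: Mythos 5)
Your proof is correct and follows essentially the same route as the paper: apply the Theorem after checking via Lemma~\ref{lem:3} that a hereditary ideal is a homological ideal. The paper dispatches this check in a single sentence, leaving implicit the observation you spell out---that projectivity of $J$ over $A^e$ yields projectivity as a one-sided $A$-module (because $A^e=A\otimes_k A^{\rm op}$ is free over $A$ on either side), and hence the vanishing ${\rm Tor}_i^A(J,A/J)=0$ for $i\geq 1$. This is the same reasoning the paper already uses at the start of the proof of the Theorem when it passes from finite bimodule projective dimension to finite one-sided projective dimension, so your added detail is consistent with the paper's conventions and correctly fills in the elision.
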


\begin{proof}
It suffices to observe by Lemma \ref{lem:3} that $J$ is a homological ideal.
\end{proof}

\section{Examples}

In this section, we will describe a construction of matrix algebras  to illustrate Corollary \ref{cor:1}. In particular, the
singularity categories of sone non-Gorenstein algebras are studied.

The following construction is similar to \cite[Section 4]{KN}. Let $A$ be a finite dimensional algebra over a field $k$. Let $_AM$ and $N_A$ be a left  and right $A$-module, respectively. Then
$M\otimes_kN$ becomes an $A$-$A$-bimodule. Consider an $A$-$A$-bimodule monomorphism $\phi\colon M\otimes_k N\rightarrow A$
such that ${\rm Im}\; \phi$ vanishes both on $M$ and $N$. Here, we observe that ${\rm Im}\; \phi\subseteq A$ is an ideal.
The matrix $\Gamma=\begin{pmatrix}A & M \\ N& k\end{pmatrix}$
becomes an associative algebra via the following multiplication
$$\begin{pmatrix}a & m \\ n & \lambda\end{pmatrix} \;  \begin{pmatrix}a' & m' \\ n' & \lambda'\end{pmatrix}= \begin{pmatrix}aa'+\phi(m\otimes n) & am'+\lambda' m \\ na'+\lambda n' & \lambda\lambda'\end{pmatrix}.$$

\begin{prop}\label{prop:1}
Keep the notation and assumption as above. Then there is a triangle equivalence $\mathbf{D}_{\rm sg}(\Gamma) \simeq \mathbf{D}_{\rm sg}(A/{\rm Im}\; \phi)$.
\end{prop}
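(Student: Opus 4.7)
The plan is to invoke Corollary \ref{cor:1}: it suffices to exhibit a hereditary ideal $J\subseteq \Gamma$ such that $\Gamma/J\simeq A/{\rm Im}\,\phi$ as algebras. The natural candidate is the two-sided ideal generated by the idempotent corresponding to the lower-right corner of $\Gamma$.

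Concretely, set $f=\begin{pmatrix}0 & 0\\ 0 & 1\end{pmatrix}\in\Gamma$; the hypothesis that ${\rm Im}\,\phi$ vanishes on $M$ and on $N$ is precisely what is needed both for the twisted multiplication on $\Gamma$ to be associative and for $f^2=f$ to hold. Let $J=\Gamma f\Gamma$. A routine matrix calculation, using that products of the form $\gamma_1 f\gamma_2$ produce exactly the elements $\phi(m\otimes n)$ in the upper-left corner, gives
$$J=\begin{pmatrix}{\rm Im}\,\phi & M\\ N & k\end{pmatrix},$$
from which the identification $\Gamma/J\simeq A/{\rm Im}\,\phi$ is evident; and $J^2=J$ follows automatically since $J$ is generated by an idempotent.

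The core step is to show that $J$ is projective as a $\Gamma$-$\Gamma$-bimodule (compare \cite[Lemma 3.4]{PeXi}). For this, consider the bimodule multiplication map
$$\mu\colon \Gamma f\otimes_{f\Gamma f} f\Gamma\To J.$$
Since $f\Gamma f\simeq k$, the source is simply $\Gamma f\otimes_k f\Gamma$, which is a direct summand of $\Gamma\otimes_k\Gamma\simeq \Gamma^e$ and is therefore projective as a $\Gamma$-bimodule. That $\mu$ is bijective reduces, after identifying source and target entry-by-entry, to the assertion that $\phi\colon M\otimes_k N\to {\rm Im}\,\phi$ is an isomorphism---injective by the monomorphism hypothesis on $\phi$ and surjective by the definition of ${\rm Im}\,\phi$. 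This entry-wise verification of $\mu$ is the main obstacle, as one must keep careful track of how the twisted multiplication interacts with the tensor product over $f\Gamma f$. Once $J$ is seen to be hereditary, Corollary \ref{cor:1} immediately delivers the desired equivalence $\mathbf{D}_{\rm sg}(\Gamma)\simeq \mathbf{D}_{\rm sg}(\Gamma/J)\simeq \mathbf{D}_{\rm sg}(A/{\rm Im}\,\phi)$.
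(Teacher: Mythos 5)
Your proof is correct and takes essentially the same route as the paper's: both set $J=\Gamma e\Gamma$ with $e$ the lower-right corner idempotent, identify $\Gamma/J$ with $A/{\rm Im}\,\phi$, verify that $J$ is hereditary by showing the multiplication map $\Gamma e\otimes_k e\Gamma\to J$ is a bimodule isomorphism (using injectivity of $\phi$), and then apply Corollary \ref{cor:1}.
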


\begin{proof}
Set $J=\Gamma e\Gamma$ with $e=\begin{pmatrix}0 & 0 \\ 0 & 1\end{pmatrix}$. Observe that $\Gamma/{J=A/{\rm Im}\; \phi}$. The ideal $J$ is hereditary: $J^2=J$ is clear, while the natural map  $\Gamma e\otimes_k e\Gamma\rightarrow J$ is an isomorphism of  $\Gamma$-$\Gamma$-bimodules and then $J$ is a projective $\Gamma$-$\Gamma$-bimodule. The isomorphism uses that $\phi$ is mono. Then
we apply Corollary \ref{cor:1}.
\end{proof}

\begin{rem}
The above construction contains the one-point extension and coextension of algebras,
where $M$ or $N$ is zero. Hence Proposition \ref{prop:1} contains the results in \cite[Section 4]{Ch11}.
\end{rem}

We will illustrate Proposition \ref{prop:1} by three examples. Two of these examples extend
an example considered by Happel in \cite{Hap91}. In particular, based on results in 
\cite{Ch11}, we obtain descriptions of the singularity categories of some non-Gorenstein algebras.

Recall from \cite{Hap91} that an algebra $A$ is
\emph{Gorenstein} provided that both as a left and right module, the regular module $A$ has finite injective dimension. It follows from \cite[Theorem 4.4]{Buc} and \cite[Theorem 4.6]{Hap91} that in the Gorenstein case, the singularity
category $\mathbf{D}_{\rm sg}(A)$ is \emph{Hom-finite}. This means that all  Hom spaces  in $\mathbf{D}_{\rm sg}(A)$ are  finite dimensional over $k$.

For algebras given by quivers and relations, we refer to \cite[Chapter III]{ARS}.

\begin{exm}
{\rm
Let $\Gamma$ be the $k$-algebra given by  the following quiver $Q$ with relations $\{\delta x, \beta x, x\gamma, x\alpha, \beta \gamma, \delta \alpha, \beta \alpha, \delta \gamma, \alpha \beta-\gamma\delta\}$.  We write the concatenation of paths from the left to the right.
\SelectTips{eu}{10}
\[\xymatrix@!=7pt{
\cdot_1 \ar@/^/[rr]|{\alpha} & &\ar@/^/[ll]|{\beta} \ast \ar@/^/[rr]|{\delta} \ar@(lu,ru)[]|{x}  && \cdot_2 \ar@/^/[ll]|{\gamma}}\]

We have in $\Gamma$ that $1=e_1+e_\ast+ e_2$, where the $e$'s are the primitive idempotents
corresponding to the vertices. Set $\Gamma'=\Gamma/\Gamma e_1\Gamma$. It is an algebra with radical square zero, whose quiver is obtained from $Q$ by removing the vertex $1$ and the adjacent arrows.

We identify $\Gamma$ with  $\begin{pmatrix}A & k\alpha \\ k\beta & k\end{pmatrix}$, where the $k$ in the southeast corner
is identified with $e_1\Gamma e_1$, and $A=(1-e_1)\Gamma (1-e_1)$. The corresponding ${\rm Im}\; \phi$ equals $k\alpha\beta$, and
we have $A/{{\rm Im}\; \phi}=\Gamma'$; consult the proof of Proposition \ref{prop:1}. Then Proposition \ref{prop:1} yields  a triangle equivalence $\mathbf{D}_{\rm sg}(\Gamma)\simeq \mathbf{D}_{\rm sg}(\Gamma')$.

The triangulated category $\mathbf{D}_{\rm sg}(\Gamma')$ is completely described in \cite{Ch11}; also see \cite{Sm}; in particular, it is
not Hom-finite.  More precisely, it is equivalent to the category of finitely generated projective modules on a von Neumann regular algebra. The algebra $\Gamma'$, or rather its Koszul dual,  is related to the noncommutative space of Penrose tiling via the work of Smith; see \cite[Theorem 7.2 and Example]{Sm}. We point out that the algebra $\Gamma$ is non-Gorenstein, since $\mathbf{D}_{\rm sg}(\Gamma)$ is not Hom-finite.
}\end{exm}

\begin{exm}\label{exm:2}
{\rm

Let $\Gamma$ be the $k$-algebra given by the following quiver $Q$ with relations $\{
x_1^2, x_2^2, x_1\alpha_1, x_2\alpha_1, \beta_2\alpha_1, \beta_2\alpha_1, x_1\alpha_2,
x_2\alpha_2, \beta_1\alpha_2, \beta_2\alpha_2, \alpha_1\beta_1-x_1x_2,
\alpha_2\beta_2-x_2x_1\}$.

\SelectTips{eu}{10}
\[\xymatrix@!=7pt{
\cdot_1 \ar@/^/[rr]|{\alpha_1} & &\ar@/^/[ll]|{\beta_1} \ast \ar@/^/[rr]|{\beta_2} \ar@(lu,ru)[]|{x_1} \ar@(ld,rd)[]|{x_2} && \cdot_2 \ar@/^/[ll]|{\alpha_2}}\]

We claim that there is a triangle equivalence $\mathbf{D}_{\rm sg}(\Gamma)\simeq \mathbf{D}_{\rm sg}(k\langle x_1, x_2\rangle/{(x_1, x_2)^2})$. Here, $k\langle x_1, x_2\rangle$ is the free algebra with two variables.

We point out that the triangulated category $\mathbf{D}_{\rm sg}(k\langle x_1, x_2\rangle/{(x_1, x_2)^2})$ is described
completely in \cite[Example 3.11]{Ch11}. Similar as in the example above, this algebra $\Gamma$ is non-Gorenstein.

To see the claim,  we observe that the quiver $Q$ has two loops and two $2$-cycles. The proof is done by ``removing the $2$-cycles".
We have  a natural isomorphism $\Gamma=\begin{pmatrix} A & k\alpha_1 \\ k\beta_1 & k\end{pmatrix}$, where $k=e_1\Gamma e_1$ and $A=(1-e_1)\Gamma(1-e_1)$. We observe that
Proposition \ref{prop:1} applies with the corresponding ${\rm Im}\; \phi=k\alpha_1\beta_1$.
Set $A/{{\rm Im}\; \phi}=\Gamma'$. So $\mathbf{D}_{\rm sg}(\Gamma)\simeq \mathbf{D}_{\rm sg}(\Gamma')$. The quiver of $\Gamma'$ is obtained from $Q$ by removing the vertex $1$  and the adjacent arrows, while its relations are obtained from the ones of $\Gamma$ by replacing $\alpha_1\beta_1-x_1x_2$ with $x_1x_2$.
Similarly, $\Gamma'=\begin{pmatrix}A' & k\alpha_2\\ k\beta_2 &k\end{pmatrix}$ with $k=e_2\Gamma' e_2$ and $A'=e_*\Gamma' e_*$.
Then Proposition \ref{prop:1} applies and we get the equivalence $\mathbf{D}_{\rm sg}(\Gamma')\simeq \mathbf{D}_{\rm sg}(k\langle x_1, x_2\rangle/{(x_1, x_2)^2})$.

This example generalizes directly to a quiver with $n$ loops and $n$ $2$-cycles with similar relations. The corresponding
statement for the case $n=1$ is implicitly contained in \cite[2.3 and 4.8]{Hap91}.
}\end{exm}

The last example is a Gorenstein algebra.

\begin{exm}
{\rm Let $r\geq 2$. Consider the following quiver $Q$ consisting of three $2$-cycles and a central $3$-cycle $Z_3$.
We identify $\gamma_3$ with $\gamma_0$, and denote by $p_i$ the path in the central cycle starting at vertex $i$ of
length $3$.
\SelectTips{eu}{10}
\[\xymatrix@!=7pt{
&  &  \cdot_{1'} \ar@/^/[d]|{\alpha_1} \\
& &  \cdot_{1} \ar@/^/[u]|{\beta_1}\ar[ld]|{\gamma_1} \\
\cdot_{2'} \ar@/^/[r]|{\alpha_2} & \ar@/^/[l]|{\beta_2} \cdot_2  \ar[rr]|{\gamma_2} & & \cdot_3 \ar[lu]|{\gamma_3} \ar@/^/[r]|{\beta_3} & \cdot_{3'} \ar@/^/[l]|{\alpha_3}
}\]
Let $\Gamma$ be the $k$-algebra given by the quiver $Q$ with relations $\{  \beta_i\alpha_i, \gamma_i\alpha_i, \beta_i\gamma_{i-1},
\beta_i\alpha_i-p_i^r\; |\; i=1, 2,3\}$. We point out that in $\Gamma$ all paths in the central cycle of length
strictly larger than $3r+1$ vanish.

Set $A=kZ_3/{(\gamma_1, \gamma_2, \gamma_3)^{3r}}$, where $kZ_3$ is the path algebra  of the central $3$-cycle $Z_3$.
The algebra $A$ is self-injective and  Nakayama (\cite[p.111]{ARS}). Denote by $A\mbox{-\underline{mod}}$ the stable category of $A$-modules; it is naturally a triangulated category; see \cite[Theorem I.2.6]{Hap88}.

We claim that there is a triangle equivalence $\mathbf{D}_{\rm sg}(\Gamma)\simeq A\mbox{-\underline{mod}}$.

For the claim, we observe an isomorphism  $A=\Gamma/{\Gamma(e_{1'}+e_{2'}+e_{3'})\Gamma}$. We argue as in Example \ref{exm:2} by removing the three $2$-cycles and applying Proposition \ref{prop:1} repeatedly. Then we get a triangle equivalence $\mathbf{D}_{\rm sg}(\Gamma)\simeq \mathbf{D}_{\rm sg}(A)$. Finally, by \cite[Theorem 2.1]{Ric} we have a triangle equivalence $\mathbf{D}_{\rm sg}(A)\simeq A\mbox{-\underline{mod}}$. Then we are done.

We observe that the algebra $\Gamma$ is Gorenstein with self injective dimension two. Moreover, this example generalizes directly to a quiver with $n$ $2$-cycles and a central $n$-cycle with similar
relations. The case where $n=1$ and $r=2$ coincides with the example considered in \cite[2.3 and 4.8]{Hap91}.
}\end{exm}

\vskip 10pt

 \noindent {\bf Acknowledgements.}\quad The results of this paper answer partially a question, which was raised by
 Professor Changchang Xi during a conference held in Jinan, June 2011. The author thanks Huanhuan Li for helpful discussion.

\bibliography{}

\vskip 10pt

 {\footnotesize \noindent Xiao-Wu Chen, School of Mathematical Sciences,
  University of Science and Technology of
China, Hefei 230026, Anhui, PR China \\
Wu Wen-Tsun Key Laboratory of Mathematics, USTC, Chinese Academy of Sciences, Hefei 230026, Anhui, PR China.\\
URL: http://mail.ustc.edu.cn/$^\sim$xwchen}

\end{document}